\title{Centralizer algebras of the group associated to ${\Z}_4$-codes}
\author{Masashi Kosuda and Manabu Oura} 
\theoremstyle{plain}
\newtheorem{thm}{Theorem}[section]
\newtheorem{cor}[thm]{Corollary}
\newtheorem{lem}[thm]{Lemma}
\theoremstyle{definition}
\newtheorem{rem}[thm]{\sc Remark}
\newcommand{\Z}{{\mathbb Z}}
\newcommand{\Comp}{\mathbb C}
\newcommand{\bolde}{\mbox{\boldmath $e$}}
\newcommand{\boldv}{\mbox{\boldmath $v$}}
\newcommand{\boldw}{\mbox{\boldmath $w$}}
\newcommand{\boldX}{\mbox{\boldmath $X$}}
\newcommand{\End}{\mbox{End}}
\newcommand{\bD}{\mbox{$\bar D$}}
\newcommand{\bT}{\mbox{$\bar T$}}
\date{}
\begin{document}
\maketitle
\begin{abstract}
The purpose of this paper is to investigate the finite group which appears in the study of the Type II ${\Z}_4$-codes.
To be precise, it is characterized in terms of generators and relations, and  we determine the structure of the centralizer algebras
of the tensor representations of this group.
\end{abstract}

\renewcommand{\thefootnote}{\fnsymbol{footnote}}
\footnote[0]{
Keywords: Centralizer algebra, ${\Z}_4$-code, Bratteli diagram\newline
\hspace{0.4cm}
MSC2010:
Primary 05E10, Secondary 05E05 05E15 05E18.
\newline
\hspace{0.4cm}
running head: Centralizer algebra of the group associated to ${\Z}_4$-codes
}

\section{Introduction}

This is a continuation of the paper \cite{KO}
in which the centralizer algebras of the tensor representations
of the specified finite group are investigated.
The group discussed there is closely connected to binary codes,
while the group $\mathfrak{G}$ in the present paper arises from $\Z_4$-codes.
The purpose of this paper is 
to characterize $\mathfrak{G}$ in terms of generators and relations, and 
to determine the structure of the centralizer algebras
of the tensor representations of this group.

One of our interests of this researches
originally lies in the applications of various coding theory to modular forms.
And then we become also interested in the groups appearing in the course of this study.
We shall describe this in $\Z_4$-codes.

The theory of codes over ${\Z}_4$
has attracted great interest since around 1990
({\it cf}. \cite{HZ4}, \cite{BZ4}, \cite{BTypeII}
and the references cited there).
Let ${\Z}_4=\{0,1,2,3\}$ be the residue ring of the rational integers
modulo $4$.
We take a ${\Z}_4$-code $C$ of length $n$, that is,
an additive subgroup of ${\Z}_4^n$.
It is said to be Type II if 
\[
	C=\{u\in {\Z}_4^n\ :\ (u,v)=\sum_i u_iv_i\equiv 0\pmod{4},\ \forall v\in C\}
\]
and
\[
	(v,v)\equiv 0\pmod{8}, \ \ \forall v\in C.
\]
The symmetrized weight enumerator of $C$ is, by definition,
\[
S_C(x,y,z)=\sum_{v\in C}
	x^{wt_0(v)}y^{wt_1(v)+wt_3(v)}z^{wt_2(v)}
\]
where $wt_a(v)=\sharp \{i:\ v_i=a\}$ for $v=(v_1,v_2,\ldots,v_n)$.
Among the several applications of the weight enumerator,
we have been interested in constructing a modular form
(\textit{cf}. \cite{BTypeII,Ru2,Ru3}. See \cite{Cameron} for combinatorial application).
Let 
\[
f_a(\tau)=\sum_{n\in \Z}\exp 2 \pi \sqrt{-1} \cdot 2\tau \left(x+\frac{a}{4}\right)^2
\]
be the modified theta constants where $a\in \{0,1,2\}$ and $\tau$ is a complex number with positive imaginary part.
It is then known that for a Type II $\Z_4$-code $C$ of length $n$,
the weight enumerator $S_C(x,y,z)$ gives a modular form of $n/2$ for $SL(2,\Z)$
by replacing $(x,y,z)$ with $(f_0(\tau),f_1(\tau),f_2(\tau))$.
This remarkable fact comes from 
the invariance property of the symmetrized weight enumerator under the action of the group $\mathfrak{G}$.
The group $\mathfrak{G}$ is defined as follows.

Let $\eta=\dfrac{1+i}{\sqrt{2}}$ be a primitive 8-th root of unity
and $\mathfrak{G}$ a group generated by
${\cal D} = \mbox{diag}(1, \eta, -1)$
and
\[
	{\cal T} =
	\frac{\eta}{2}
	\begin{pmatrix}
		1& 2&1\\
		1& 0&-1\\
		1&-2&1
	\end{pmatrix}.
\]
The group $\mathfrak{G}$ is of order $384$ and naturally acts on the polynomial ring
$\Comp[x,y,z]$ of three variables over the complex number field $\Comp$.
Then we can consider the invariant ring of $\mathfrak{G}$:
\[
	\Comp[x,y,z]^{\mathfrak{G}}
	= \{f\in \Comp[x,y,z]:\
		Af=f\ \mbox{for any}\ A\in \mathfrak{G}\}.
\]
The invariance property mentioned above means that 
the symmetrized weight enumerator for a Type II $\Z_4$-code is an element of $\Comp[x,y,z]^{\mathfrak{G}}$.
Due to the significance of this group $\mathfrak{G}$, we are led to reveal possible properties of $\mathfrak{G}$.
In \cite{MO} we defined the E-polynomials of $\mathfrak{G}$ and determined the ring generated by them. 
E-polynomials are finite analogue of Eisenstein series.
This paper stands on the same line.

On the other hand, we have 
a famous book \cite{We} by H.~Weyl which intensively studies
the invariant theory as well as the representation theory.
The centralizer algebra plays an important role in the arguments of this book.
We apply this philosophy to $\mathfrak{G}$
in order to get more properties.
The correspondence between the invariant forms and the representation is as follows:
$\mathfrak{G}$ acts on 
covariant vectors $y^{(1)},\ldots,y^{(k)}$ cogrediently and on 
contravariant vectors $\xi^{(1)},\ldots,\xi^{(k)}$ contragrediently.
Then the matrices
$B=(b(i_1\cdots i_k; j_1\cdots j_k))$ of the coefficients of the invariant forms 
\[
	\sum_{i;j}
	b(i_1\cdots i_k; j_1\cdots j_k)
	\xi_{i_1}^{(1)}\cdots \xi_{i_k}^{(k)}
	y_{j_1}^{(1)}\cdots y_{j_k}^{(k)}
\]
of $\mathfrak{G}$
form the centralizer algebra
of the $k$-th tensor representation of $\mathfrak{G}$.
In this paper we mainly discuss the centralizer algebras which is nothing else but the study of the invariant forms.




We organize this paper as follows.
First in Section~2
we give a presentation of ${\mathfrak{G}}$ by generators and relations.
To determine the centralizer algebras,
it is enough to consider the projective group
$P{\mathfrak{G}} = {\mathfrak{G}}/Z({\mathfrak{G}})$,
the coset group by the center. 
We give the presentation of $P{\mathfrak{G}}$ in Section~3.
In Section~4, we 
present
a complete set of 
all the equivalence classes of the irreducible representations of $P{\mathfrak{G}}$.
By Schur-Weyl duality,
the multiplicities of the irreducible representations
in the tensor space, determine the structure of the centralizer algebra.
So we decompose the tensor representation of $P{\mathfrak{G}}$ into
irreducible ones in Section~5.
Finally in Section~6,
we give the structure of the centralizer algebras of the tensor representations of $P{\mathfrak{G}}$.
\section{Characterization}

Let $\mathfrak{G}$ be the group introduced in Section 1.
We note that ${\cal T}^2 = \mbox{diag}(i, i, i)$
(and hence ${\cal T}^2$, 
${\cal T}^4$ and ${\cal T}^6$)
is in the center of $\mathfrak{G}$.
We also note that both ${\cal D}$ and ${\cal T}$ have order 8 and that
they satisfy the following relations:
\[
	{\cal T}{\cal D}{\cal T} = {\cal D}^7{\cal T}^3{\cal D}^7,\ 
	{\cal T}{\cal D}^5{\cal T} = {\cal D}^3{\cal T}^7{\cal D}^3.
\]
We show that actually these relations determine the group $\mathfrak{G}$, that is,

\begin{thm}\label{thm:defrelG}
Let 
$G$
be the group generated by
the symbols $D$ and $T$ which obey the following relations:
\begin{gather}
	D^8 = 1,\tag{R1}\\
	T^8 = 1,\tag{R2}\\
	T^2D = DT^2,\tag{R3}\\
	TDT = D^7T^3D^7,\tag{R4}\\
	TD^5T = D^3T^7D^3.\tag{R5}
\end{gather}
Then 
each element of $G$ can be written in exactly one of the 
following forms:
\begin{gather}
1\tag{W1}\\
D^{n_1},\tag{W2}\\
T^{n_2},\tag{W3}\\
D^{n_3}T^{n_4},\tag{W4}\\
T^{p_1}D^{n_5},\tag{W5}\\
D^{n_6}T^{p_2}D^{n_7},\tag{W6}\\
TD^{e_1}T^{p_3},\tag{W7}\\
D^{n_8}TD^{e_2}T^{p_2}\tag{W8}.
\end{gather}
Here $n_1, \ldots, n_8 \in \{1,2,\ldots, 7\}$,
$p_1, p_2, p_3 \in \{1, 3, 5, 7\}$
and $e_1, e_2 \in \{2, 4, 6\}$.
In particular the order of $G$ is $384$.
\end{thm}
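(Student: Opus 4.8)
The plan is to pin down the order of $G$ from both sides and read off the normal form as a byproduct. First I would check that the eight displayed shapes, with the stated ranges of exponents, comprise exactly
\[
1+7+7+7^{2}+4\cdot 7+7\cdot 4\cdot 7+3\cdot 4+7\cdot 3\cdot 4=384
\]
words; call this set of words $\mathcal{N}$. The claim ``each element is written in exactly one of these forms'' is precisely the assertion that the natural map $\mathcal{N}\to G$ is a bijection. I would obtain this by proving $|G|\le 384$ (every element has at least one such form) together with $|G|\ge 384$ (there are at least $384$ distinct elements); the resulting equality $|G|=384$ then forces the surjection $\mathcal{N}\to G$ to be injective as well, i.e.\ the form is unique.

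For the lower bound I would invoke the matrix group $\mathfrak{G}$. By the remarks preceding the theorem, $\mathcal{D}$ and $\mathcal{T}$ have order $8$, the scalar $\mathcal{T}^{2}=\operatorname{diag}(i,i,i)$ is central and hence commutes with $\mathcal{D}$, and $\mathcal{D},\mathcal{T}$ satisfy the two displayed braid-type relations. Thus all of (R1)--(R5) hold in $\mathfrak{G}$, so $D\mapsto\mathcal{D}$, $T\mapsto\mathcal{T}$ extends to a group homomorphism $\varphi\colon G\to\mathfrak{G}$. Since $\mathfrak{G}=\langle\mathcal{D},\mathcal{T}\rangle$, this $\varphi$ is surjective, whence $|G|\ge|\mathfrak{G}|=384$.

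The substance is the upper bound, which I would establish by a rewriting (normal-form) argument. Since $G=\langle D,T\rangle$ and the inverses $D^{-1}=D^{7}$, $T^{-1}=T^{7}$ already occur as positive powers, it suffices to show that $1\in\mathcal{N}$ and that $\mathcal{N}$ is closed under right multiplication by $D$ and by $T$; then starting from $1$ and repeatedly appending a generator never leaves $\mathcal{N}$, so $\mathcal{N}$ surjects onto $G$ and $|G|\le 384$. The two tools driving the reductions are the centrality of $z:=T^{2}$ (from (R3)), which lets one move every even $T$-power freely and rewrite each $T$-exponent as a central factor times $T^{0}$ or $T^{1}$, and the recast forms of (R4), (R5), namely $TDT=z\,D^{7}TD^{7}$ and $TD^{5}T=z^{-1}D^{3}TD^{3}$, which collapse a $T\cdots T$ sandwich around a $D$-power. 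From these one derives the conjugation rule $TDT^{-1}=D^{7}TD^{7}$ and its consequences for the various $TD^{k}T$, so that any product can be pushed past at most the syllable pattern $D\,T\,D\,T$, with the exponents driven into the ranges $\{1,\dots,7\}$, $\{1,3,5,7\}$, $\{2,4,6\}$ that define (W1)--(W8). I expect this closure verification to be the main obstacle: it is a finite but lengthy case analysis over the eight shapes and the two generators, and the genuinely delicate points are the parity bookkeeping—verifying that multiplying a form with an odd interior $T$-power or an even interior $D$-power again yields one of the admissible shapes rather than a strictly longer word.

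Finally, combining $|G|\le 384$ with $|G|\ge 384$ gives $|G|=384$; the surjection $\mathcal{N}\to G$ between finite sets of equal cardinality is then a bijection, so every element of $G$ is represented by exactly one word of $\mathcal{N}$, and as a bonus $\varphi$ is an isomorphism $G\cong\mathfrak{G}$.
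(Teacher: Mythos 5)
Your proposal takes essentially the same route as the paper's own proof: an upper bound $|G|\le 384$ obtained by showing the set of normal forms (W1)--(W8) is closed under multiplication by the generators (driven, exactly as you say, by the centrality of $T^2$ and the recast relations (R4), (R5), which the paper packages into the derived relations $TD^3T=D^5T^5D^5$, $TD^7T=DTD$ and $D^{2i}TD^{2j}T=TD^{2j}TD^{2i}$ before an eight-case analysis), combined with a lower bound coming from the surjection $G\to\mathfrak{G}$. The only cosmetic differences are that you check closure under right multiplication alone (which indeed suffices, since $D^{-1}=D^7$ and $T^{-1}=T^7$), whereas the paper redundantly checks both sides, and that you quote $|\mathfrak{G}|=384$ for the lower bound where the paper instead verifies that the $384$ matrix images of the normal forms are pairwise distinct.
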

Before proving the theorem above,
we shall show
\begin{lem}\label{lem:DTrel}
We have
\begin{align}
TD^3T &= D^5T^5D^5,\tag{R6}\\
TD^7T &= DTD,\tag{R7}\\
D^{2i}TD^{2j}T &= TD^{2j}TD^{2i},\quad 2i, 2j\in \{2,4,6\}.\tag{R8}
\end{align}
\end{lem}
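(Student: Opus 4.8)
The plan is to derive all three relations purely formally from (R1)--(R5), leaning throughout on the observation recorded just before the lemma that $T^2$ (and hence $T^4,T^6$) is central: this lets me slide any even power of $T$ past $D$ at will and freely replace $T^7$ by $T^{-1}$, $T^{-2}$ by $T^6$, and so on. As a preliminary I would first recast (R4) in the conjugation form $T^{-1}DT = D^7TD^7$, obtained by writing $T^3 = T^2T$ in $TDT = D^7T^3D^7$, pulling the central factor $T^2$ to the front, and cancelling one $T$ from the left.

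For (R6) and (R7) the idea is identical and short: invert the hypothesis and tidy up. Taking inverses of both sides of (R4) gives $T^7D^7T^7 = DT^5D$; writing each outer $T^{7}=T^{-1}$ as $T\cdot T^{-2}$ and extracting the central even powers turns the left-hand side into $T^{-4}(TD^7T)$ and the right-hand side into $T^4(DTD)$, so after multiplying by $T^4$ and using $T^8=1$ I get (R7), namely $TD^7T = DTD$. Inverting (R5) the same way yields $T^7D^3T^7 = D^5TD^5$, i.e. $T^{-4}(TD^3T) = D^5TD^5$, and multiplying by $T^4$ and absorbing it into the middle gives (R6), $TD^3T = D^5T^5D^5$.

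The substantive relation is (R8), and I would reduce it to a single commutation. Since every $D^{2i}$ is a power of $D^2$, it is enough to show that each $TD^{2j}T$ commutes with $D^2$; and because $(TD^2T)^2 = T^2(TD^4T)$ and $(TD^2T)(TD^4T) = T^2(TD^6T)$ by centrality of $T^2$, both $TD^4T$ and $TD^6T$ are central multiples of products of copies of $TD^2T$. So the whole of (R8) follows once I prove that $D^2$ and $TD^2T$ commute. To do this I would expand $TD^2T = (TDT)(T^{-1}DT)$, substitute $TDT = D^7T^3D^7$ from (R4) and $T^{-1}DT = D^7TD^7$, and simplify the interior word $T^3D^6T$; the factor $TD^6T$ is itself rewritten through (R5) via $TD^6T = (TD^5T)(T^{-1}DT) = D^3T^7D^2TD^7$. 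Folding away the central powers of $T$ and collecting the exponents of $D$, the expression collapses to $TD^2T = D^2(TD^2T)D^6$, and since $D^6 = D^{-2}$ this is exactly the desired commutation of $D^2$ with $TD^2T$.

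The main obstacle is controlling the bookkeeping in this last computation rather than any deep idea. The words generated by (R4) and (R5) carry odd powers of $T$ on both sides, and one must repeatedly recognise the even powers as central and fold them in without accidentally invoking the very relation being proved. The real danger here is circularity: many of the tempting intermediate identities (for instance $T^{-1}D^2T = TD^2T^{-1}$) turn out to be mere restatements of the centrality of $T^2$ and contribute nothing. The computation therefore has to bring in (R5) genuinely, through the rewriting of $TD^6T$, and not only (R4) and (R3); it is precisely this appearance of (R5) that makes the outer exponents align into the pattern $D^2(\cdots)D^{-2}$ and thus delivers (R8).
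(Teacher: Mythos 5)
Your proof is correct, but for the substantive relation (R8) it takes a genuinely different route from the paper's. For (R6) and (R7) the two arguments are essentially the same short manipulation: you invert (R4) and (R5) and strip off central even powers of $T$, while the paper rearranges the same relations across the equality (e.g.\ from (R5): $D^5TD^5 = T^7D^3T^7 = TD^3T^5$, then multiply by $T^4$ on the right). The divergence is in (R8). The paper proves the case $2i=2$ by three separate explicit chains of substitutions, one for each $2j\in\{2,4,6\}$, each invoking (R4)--(R7), and then bootstraps the cases $2i=4$ and $2i=6$ inductively by writing $D^4TD^{2j}T = D^2\cdot D^2TD^{2j}T$ and applying the previous case twice. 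You instead reduce all nine cases to the single commutation $D^2(TD^2T)=(TD^2T)D^2$, using the observation that $TD^4T = T^{-2}(TD^2T)^2$ and $TD^6T = T^{-4}(TD^2T)^3$ are central multiples of powers of $TD^2T$, so that commutation with $TD^2T$ propagates to everything; you then prove that one commutation via the factorization $TD^2T=(TDT)(T^{-1}DT)$, substituting (R4), the conjugation form $T^{-1}DT = D^7TD^7$, and (R5) applied to the inner word $TD^6T$. I verified this computation: it does collapse to $TD^2T = D^2(TD^2T)D^6$, which is the required commutation since $D^6=D^{-2}$. Your route is more economical and conceptual --- one base computation instead of three, with the remaining cases following from multiplicative structure rather than further rewriting --- whereas the paper's casework is pure rewriting, the same technique its proof of Theorem 2.1 then reuses on arbitrary words, so nothing beyond direct substitution is ever needed.
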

\begin{proof}
In the following computations, 
the relations (R1), (R2) and (R3) are repeatedly used without mentioning.
From the relation (R5) we have
\[
TD^5T = D^3T^7D^3
\Leftrightarrow TD^5 = D^3T^7D^3T^7
\Leftrightarrow D^5TD^5 = T^7D^3T^7 = TD^3T^5.
\]
Multiplying $T^4$ from the right, we have
$D^5T^5D^5 = TD^3T$.

From the relation (R4) we have
\[
TDT = D^7T^3D^7 \Leftrightarrow TD = D^7T^3D^7T^7 = D^7TD^7T
\Leftrightarrow DTD = TD^7T.
\]
Finally we show (R8).
For the case $2i=2$, we have
\begin{align*}
D^2TD^2T &= D\cdot DTD\cdot DT = D\cdot TD^7T \cdot DT = DTD\cdot D^6\cdot TDT\\	&= TD^7T\cdot D^6\cdot D^7T^3D^7\\
	&= TD^7T\cdot D^{13}\cdot T^3D^7\\
	&= TD^7\cdot TD^5T\cdot T^2D^7\\
	&= TD^7\cdot D^3T^7D^3\cdot T^2D^7\\
	&= TD^{10}T^7T^2D^3D^7\\
	&= TD^2TD^2,\\
D^2TD^4T &= D\cdot DTD\cdot D^3T
= D\cdot TD^7T \cdot D^3T = DTD\cdot D^6\cdot TD^3T\\
	&= TD^7T\cdot D^6\cdot D^5T^5D^5\\
	&= TD^7T\cdot D^{11}\cdot T^5D^5\\
	&= TD^7\cdot TD^3T\cdot T^4D^5\\
	&= TD^7\cdot D^5T^5D^5\cdot T^4D^5\\
	&= TD^{12}T^5T^4D^5D^5\\
	&= TD^4TD^2
\end{align*}
and
\begin{align*}
D^2TD^6T &= D\cdot DTD\cdot D^5T
= D\cdot TD^7T \cdot D^5T = DTD\cdot D^6\cdot TD^5T\\
	&= TD^7T\cdot D^6\cdot D^3T^7D^3\\
	&= TD^7T\cdot D^9\cdot T^7D^3\\
	&= TD^7\cdot TDT\cdot T^6D^3\\
	&= TD^7\cdot D^7T^3D^7\cdot T^6D^3\\
	&= TD^{14}T^3T^6D^7D^3\\
	&= TD^6TD^2.
\end{align*}
For $2i=4$, we utilize the case $2i=2$ as follows.
\begin{align*}
D^4TD^2T &= D^2\cdot D^2TD^2T = D^2\cdot TD^2TD^2 = D^2TD^2T\cdot D^2
=TD^2TD^2\cdot D^2\\
	&=TD^2TD^4.
\end{align*}
Similarly for $2i=6$, we utilize the case $2i=4$
and we can obtain the remaining relations.
\end{proof}
\begin{rem}
\if0
1) 
Because of (R3), 
we find that $T^n$ is in the center of $G$
if $n$ is even.

2)
\fi
The initial relations (R4), (R5) and the relations (R6), (R7) above
show that
\begin{equation*}
	TD^{\mbox{odd}}T = D^{\mbox{odd}}T^{\mbox{odd}}D^{\mbox{odd}}.
\end{equation*}
Also we have
\begin{equation*}
	TD^{\mbox{odd}}
= D^{\mbox{odd}}T^{\mbox{odd}}D^{\mbox{odd}}T^{\mbox{odd}}.
\end{equation*}
\end{rem}
\begin{proof}[Proof of Theorem~\ref{thm:defrelG}]
Suppose that $w\in G$ is written in one of the forms (W1)-(W8).
We show that all the left and the right multiplications
by the letters $D$ and $T$
are again written in one of the forms (W1)-(W8).

{\bf Case 1.} The word in the form (W1) is the identity.
It is obvious that the left and the right multiplications by $D$ and $T$
are written in the form (W2) and (W3) respectively.

{\bf Case 2.} Let $w = D^{n_1}$.
Then $Dw = wD = D^{n_1+1}$ is in the form (W2) or (W1).
The left $T$-action $Tw = TD^{n_1}$ is in the form (W5)
and the right $T$-action $wT = D^{n_1}T$ is in the form (W4).

{\bf Case 3.} Let $w = T^{n_2}$.
Then the left $D$-action $Dw = DT^{n_2}$ is in the form (W4).
The right $D$-action $wD = T^{n_2}D$ is in the form (W5)
if $n_2$ is odd,
and $T^{n_2}D = DT^{n_2}$ is in the form (W4) if $n_2$ is even.
The left and the right $T$-actions $Tw = wT = T^{n_2+1}$ is
in the form (W3) or (W1).

{\bf Case 4.} 
Let $w = D^{n_3}T^{n_4}$.
Then the left $D$-action $Dw = D^{n_3+1}T^{n_4}$ is in the form (W4).
The right $D$-action $wD = D^{n_3}T^{n_4}D$ is in the form (W6)
if $n_4$ is odd.
If $n_4$ is even, then $D^{n_3}T^{n_4}D = D^{n_3}DT^{n_4}$.
This is in the form (W4).
Next consider the left $T$-action.
If $n_4$ is even, then $Tw = TD^{n_3}T^{n_4} = TT^{n_4}D^{n_3}$.
This is in the form (W5). So we assume that $n_4$ is odd.
If $n_3$ is even, then $TD^{n_3}T^{n_4}$ is in the form (W7).
If $n_3$ is odd, then
\[
	TD^{n_3}T^{n_4}
	= TD^{n_3}T\cdot T^{\mbox{even}}
	= D^{\mbox{odd}}T^{\mbox{odd}}D^{\mbox{odd}}T^{\mbox{even}}
	= D^{\mbox{odd}}T^{\mbox{odd}}T^{\mbox{even}}D^{\mbox{odd}}.
\]
This is in the form (W6).
If $n_3$ is even and $n_4$ is odd,
then $TD^{n_3}T^{n_4}$ is in the form (W7).
The right $T$-action $wT = D^{n_3}T^{n_4}T$ is in the form (W4).

{\bf Case 5.} 
Let $w = T^{p_1}D^{n_5}$.
The left $D$-action $Dw = DT^{p_1}D^{n_5}$ is in the form (W6)
and the right $D$-action $wD = T^{p_1}D^{n_5}D$ is in the form (W5).
The left $T$-action
$Tw = TT^{p_1}D^{n_5} = T^{p_1+1}D^{n_5} = D^{n_5}T^{p_1+1}$.
This is in the form (W4).
Consider the right $T$-action $wT = T^{p_1}D^{n_5}T$.
If $n_5$ is even, this is in the form (W7).
If $n_5$ is odd, then
\[
T^{p_1}D^{n_5}T
= T^{p_1-1}\cdot TD^{n_5}T
= T^{p_1-1}\cdot D^{\mbox{odd}}T^{\mbox{odd}}D^{\mbox{odd}}
= D^{\mbox{odd}}T^{p_1-1}T^{\mbox{odd}}D^{\mbox{odd}}.
\]
This is in the form (W3) or (W6).

{\bf Case 6.} 
Let $w = D^{n_6}T^{p_2}D^{n_7}$.
Obviously, both the left and the right $D$-actions are in the form (W6).
Consider the the left $T$-action $Tw = TD^{n_6}T^{p_2}D^{n_7}$.
If both $D^{n_6}$ and $D^{n_7}$ are even,
then by (R8),
\begin{align*}
	TD^{n_6}T^{p_2}D^{n_7}
	&= TD^{n_6}TT^{p_2-1}D^{n_7}
	= TD^{n_6}TD^{n_7}\cdot T^{p_2-1}
	= D^{n_6}TD^{n_7}T\cdot T^{p_2-1}\\
	&= D^{n_6}TD^{n_7}T^{p_2}.
\end{align*}
This is in the form (W8).
If $n_6$ is odd, then
\begin{align*}
	TD^{n_6}T^{p_2}D^{n_7}
	&= TD^{n_6}T\cdot T^{p_2-1}D^{n_7}
	= D^{\mbox{odd}}T^{\mbox{odd}}D^{\mbox{odd}}T\cdot T^{p_2-1}D^{n_7}\\
	&= D^{\mbox{odd}}T^{\mbox{odd}}T^{p_2-1}D^{\mbox{odd}}T\cdot D^{n_7}.
\end{align*}
This is in the form (W6).
If $n_6$ is even and $n_7$ is odd, then we have
\begin{align}\label{eq:W6}
	TD^{n_6}T^{p_2}D^{n_7}
&= TD^{n_6}TT^{p_2-1}D^{n_7}\\\nonumber
&= TD^{n_6}\cdot TD^{n_7}\cdot T^{p_2-1}\\\nonumber
&= TD^{n_6}\cdot
D^{\mbox{odd}}T^{\mbox{odd}}D^{\mbox{odd}}T^{\mbox{odd}}\cdot T^{p_2-1}\\\nonumber
&= TD^{n_6}D^{\mbox{odd}}T\cdot
T^{\mbox{odd}-1}D^{\mbox{odd}}T^{\mbox{odd}}T^{p_2-1}\\\nonumber
&= D^{\mbox{odd}}T^{\mbox{odd}}D^{\mbox{odd}}\cdot
D^{\mbox{odd}}T^{\mbox{odd}-1}T^{\mbox{odd}}T^{p_2-1}\\\nonumber
&= D^{\mbox{odd}}TT^{\mbox{odd}-1}D^{\mbox{odd}}
D^{\mbox{odd}}T^{\mbox{odd}-1}T^{\mbox{odd}}T^{p_2-1}\\\nonumber
&= D^{\mbox{odd}}TD^{\mbox{odd}}D^{\mbox{odd}}
T^{\mbox{odd}-1}T^{\mbox{odd}-1}T^{\mbox{odd}}T^{p_2-1}.\nonumber
\end{align}
This is in the form (W8).
Consider the right $T$-action $wT = D^{n_6}T^{p_2}D^{n_7}T$.
If $n_7$ is even, then we have
$D^{n_6}T^{p_2}D^{n_7}T = D^{n_6}T\cdot T^{p_2-1}D^{n_7}T
= D^{n_6}TD^{n_7}T^{p_2-1}T$.
This is in the form $(W8)$.
If $n_7$ is odd, then we have
$D^{n_6}T^{p_2}D^{n_7}T = D^{n_6}T^{p_2-1}\cdot TD^{n_7}T
= D^{n_6}T^{p_2-1}D^{\mbox{odd}}T^{\mbox{odd}}D^{\mbox{odd}}
= D^{n_6}D^{\mbox{odd}}T^{p_2-1}T^{\mbox{odd}}D^{\mbox{odd}}
$.
This is in the form $(W6)$.

{\bf Case 7.} 
Let $w = TD^{e_1}T^{p_3}$.
The left $D$-action $Dw = DTD^{e_1}T^{p_3}$ is in the form (W8).
Consider the right $D$-action $wD = TD^{e_1}T^{p_3}D$.
We have already considered this case in the equation~\eqref{eq:W6}.
The left $T$-action $Tw = TTD^{e_1}T^{p_3} = D^{e_1}T^{p_3+2}$
is in the form (W4).
The right $T$-action
$wT = TD^{e_1}T^{p_3}T = TD^{e_1}T^{p_3+1} = TT^{p_3+1}D^{e_1}$
is in the form (W5).

{\bf Case 8.} 
Finally,
consider the case $w = D^{n_8}TD^{e_2}T^{p_4}$.
The left $D$-action $Dw = DD^{n_8}TD^{e_2}T^{p_4}$
is obviously in the form (W8).
Consider the right $D$-action $wD = D^{n_8}TD^{e_2}T^{p_4}D$.
As we have seen in the equation~\eqref{eq:W6},
$TD^{e_2}T^{p_4}D$ is in the form (W8).
So is $D^{n_8}\cdot TD^{e_2}T^{p_4}D$.
Consider the left $T$-action $Tw = TD^{n_8}TD^{e_2}T^{p_4}$.
If $n_8$ is even,
then we have
\[
	TD^{n_8}TD^{e_2}\cdot T^{p_4} = D^{e_2}TD^{n_8}T\cdot T^{p_4}
= D^{e_2}TD^{n_8}T^{p_4+1}
= D^{e_2}TT^{p_4+1}D^{n_8}.
\]
This is in the form (W6).
If $n_8$ is odd, then we have
\begin{align*}
	TD^{n_8}T\cdot D^{e_2}T^{p_4}
&= D^{\mbox{odd}}T^{\mbox{odd}}D^{\mbox{odd}}\cdot D^{e_2}T^{p_4}
= D^{\mbox{odd}}TT^{\mbox{odd}-1}D^{\mbox{odd}}D^{e_2}T^{p_4}\\
&= D^{\mbox{odd}}TD^{\mbox{odd}}D^{e_2}T^{\mbox{odd}-1}T^{p_4}.
\end{align*}
This is in the form (W8).
Finally consider the right $T$-action
$wT = D^{n_8}TD^{e_2}T^{p_4}T
= D^{n_8}TD^{e_2}T^{p_4+1} =D^{n_8}TT^{p_4+1}D^{e_2}$.
This is in the form (W6).

Hence we found that under the relations (R1)-(R5),
all possible words in the alphabet $\{D, T\}$
are written in the forms (W1)-(W8).

Consider the map $\iota$ from $G$ to $\mathfrak{G}$
defined by $D\mapsto{\cal D}$ and $T\mapsto{\cal T}$.
This is a well-defined group homomorphism,
since $\mathfrak{G}$ respects the relations (R1)-(R5).
This $\iota$ is surjective from the definition.
It easy to check that all images of the words 
(W1)-(W8)
are distinct. Hence $\iota$ is a group isomorphism.
\end{proof}

By the derived isomorphism in the course of the proof above, 
we identify $G$ with $\mathfrak{G}$.

\section{Projective Group}
As in the previous section,
let $G$ be the group of order $384$ generated by $D$ and $T$.
We are interested in the specified irreducible representation (assigned $\rho_7$ below) of $G$ of degree $3$ 
given explicitly in Introduction.
There some relationships to other fields such as the invariant theory and theory of modular forms are described.
We denote by $V$ a natural module of $\rho_7$.
Our objective is to analyze the structure of
the centralizer algebra $\mbox{End}_G(V^{\otimes k})$ of $G$
in the tensor space.
This result should be regarded as adding another aspect of this important group.
Since the center $Z$ of $G$ does not affect the structure of the centralizer,
in the following, we have only to consider the projective group
$PG = G/Z$ of $G$.

It is easy to see that $Z = \{1, T^2, T^4, T^6\}$.
By the definition above,
$PG$ is generated by $\bD = DZ$ and $\bT = TZ$.
By a similar argument to that of the previous section,
we find the defining relations
and all words of $PG$.
\begin{thm}
$PG = G/Z$ has the following presentation:
\\
generators:
\[
\bD, \bT
\]
and relations
\begin{equation}
	\bD^8 = 1,\ \bT^2 = 1,\ \bT\bD\bT = \bD^7\bT\bD^7,\ 
	\bT\bD^5\bT = \bD^3\bT\bD^3.\tag{R0'}
\end{equation}
Further, each element in $PG$ can be written
in exactly one of the following forms.
\begin{gather}
\bar{1},
\bD^{n_1},
\bT,
\bD^{n_1}\bT,
\bT\bD^{n_2},
\bD^{n_3}\bT\bD^{n_4},
\bT\bD^{e_1}\bT,
\bD^{n_5}\bT\bD^{e_2}\bT.
\end{gather}
Here $n_1, \ldots, n_5 \in \{1,2,\ldots, 7\}$,
and $e_1, e_2 \in \{2, 4, 6\}$.
\end{thm}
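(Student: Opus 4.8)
The plan is to imitate the proof of Theorem~\ref{thm:defrelG}, taking advantage of the fact that passing to $PG$ collapses every exponent recording a power of $T$: since $T^2\in Z$ we have $\bT^2=\bar 1$ in $PG$, so an odd power of $\bT$ reduces to $\bT$ and an even power to $\bar 1$. First I would check that $PG$ itself obeys (R0'). Reducing (R1) modulo $Z$ gives $\bD^8=\bar 1$, the containment $T^2\in Z$ gives $\bT^2=\bar 1$ (which simultaneously makes (R3) vacuous), and reducing (R4), (R5) modulo $Z$ while using $\bT^3=\bT^7=\bT$ yields $\bT\bD\bT=\bD^7\bT\bD^7$ and $\bT\bD^5\bT=\bD^3\bT\bD^3$. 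Consequently the abstract group $H$ presented by (R0') admits a surjection $\phi\colon H\twoheadrightarrow PG$ sending each generator to its bar.

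Before the main step I would record the reductions of Lemma~\ref{lem:DTrel}: imposing $\bT^2=\bar 1$ on (R6), (R7), (R8) gives $\bT\bD^3\bT=\bD^5\bT\bD^5$, $\bT\bD^7\bT=\bD\bT\bD$, and $\bD^{2i}\bT\bD^{2j}\bT=\bT\bD^{2j}\bT\bD^{2i}$ for $2i,2j\in\{2,4,6\}$, the computations being verbatim those of Lemma~\ref{lem:DTrel}. These, together with (R0'), give the single identity $\bT\bD^{\mathrm{odd}}\bT=\bD^{\mathrm{odd}}\bT\bD^{\mathrm{odd}}$, which is all that the rewriting below needs.

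The technical core is a closure argument inside $H$. Let $S$ denote the set of the eight displayed expressions, with $n_1,\dots,n_5$ ranging over $\{1,\dots,7\}$ and $e_1,e_2$ over $\{2,4,6\}$. I would verify that $S$ is stable under left and right multiplication by the generators $\bD$ and $\bT$, running through the same eight cases as in the proof of Theorem~\ref{thm:defrelG}. Each case is strictly lighter than its analogue there: whenever the rewriting produces a factor $\bT^{\mathrm{even}}$ it vanishes and a factor $\bT^{\mathrm{odd}}$ becomes $\bT$, which is precisely the effect of $\phi$ on the forms (W1)-(W8). Since $\bar 1\in S$ and $\bD,\bT$ generate $H$, stability of $S$ shows that every element of $H$ equals one of the expressions in $S$; hence $|H|\le 1+7+1+7+7+49+3+21=96$.

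It remains to count. Theorem~\ref{thm:defrelG} gives $|G|=384$, and since $Z=\{1,T^2,T^4,T^6\}$ has order $4$ we get $|PG|=96$. The surjection $\phi$ then forces $96=|PG|\le|H|\le 96$, so $|H|=96$ and $\phi$ is an isomorphism; thus (R0') is indeed a presentation of $PG$. Finally, because the $96$ expressions in $S$ exhaust a group of exactly $96$ elements, they represent pairwise distinct elements, which is the asserted uniqueness of the normal form. The only real obstacle is the case-by-case bookkeeping of the closure step, but it is routine given the reduced relations above and noticeably shorter than in Theorem~\ref{thm:defrelG} because $\bT^2=\bar 1$ removes all the $T$-exponent case splits.
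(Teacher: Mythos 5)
Your proposal is correct and follows essentially the route the paper intends: the paper's own ``proof'' of this theorem is the single remark that it follows ``by a similar argument to that of the previous section,'' and your argument is exactly that rewriting/closure argument from Theorem~\ref{thm:defrelG}, carried out with every power of $\bT$ reduced modulo $\bT^2=\bar 1$ (including the reduced forms of (R6)--(R8), which the closure step does need and which you record). Your finishing step --- using $|PG|=|G|/|Z|=384/4=96$ together with the bound $|H|\le 96$ from the closure argument, so that the surjection forces an isomorphism and the pigeonhole forces the $96$ normal forms to be pairwise distinct --- is a clean substitute for the ending of Theorem~\ref{thm:defrelG} (where distinctness was checked on images in the concrete matrix group), and is if anything tidier.
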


Further, we can find that $PG$ is divided into 10 conjugacy classes $\mathfrak{C}_1,\mathfrak{C}_2,\ldots,\mathfrak{C}_{10}$,
each of which is represented by 
\[
 \bar{1},\bD,\bD^2,\bD^3,\bD^4,\bD^6,\bT,\bD\bT,\bD^4\bT,\bD^2\bT\bD^4\bT.
\]

\if0
Further, we can find $PG$ is divided into 10 conjugacy classes as follows:
\begin{align*}
\mathfrak{C}_{1}
&= \{ \bar{1}\},\\
\mathfrak{C}_{2}
&= \{\bD, \bD^5, \bD^6\bT, \bT\bD^6,
	\bD\bT\bD^5, \bD^2\bT\bD^4, \bD^3\bT\bD^3, \bD^4\bT\bD^2,\\
&\qquad	\bD^5\bT\bD, \bD^7\bT\bD^7, \bD^3\bT\bD^4\bT, \bD^7\bT\bD^4\bT\},\\
\mathfrak{C}_{3}
&= \{\bD^2, \bT\bD^2\bT, \bD^6\bT\bD^6\bT\},\\
\mathfrak{C}_{4}
&= \{\bD^3, \bD^7, \bD^2\bT, \bT\bD^2, \bD\bT\bD, \bD^3\bT\bD^7, \bD^4\bT\bD^6,
	\bD^5\bT\bD^5, \bD^6\bT\bD^4,\\
&\qquad	\bD^7\bT\bD^3, \bD\bT\bD^4\bT, \bD^5\bT\bD^4\bT\},\\
\mathfrak{C}_{5}
&= \{\bD^4, \bT\bD^4\bT, \bD^4\bT\bD^4\bT\},\\
\mathfrak{C}_{6}
&= \{\bD^6, \bT\bD^6\bT, \bD^2\bT\bD^2\bT\},\\
\mathfrak{C}_{7}
&= \{\bT, \bD\bT\bD^7, \bD^2\bT\bD^6, \bD^3\bT\bD^5, \bD^4\bT\bD^4,
	\bD^5\bT\bD^3, \bD^6\bT\bD^2, \bD^7\bT\bD,\\
&\qquad \bD\bT\bD^2\bT, \bD^3\bT\bD^6\bT,
		\bD^5\bT\bD^2\bT, \bD^7\bT\bD^6\bT\},\\
\mathfrak{C}_{8}
&= \{\bD\bT, \bD^3\bT, \bD^5\bT, \bD^7\bT,
	\bT\bD, \bT\bD^3, \bT\bD^5, \bT\bD^7,
	\bD\bT\bD^2, \bD\bT\bD^4, \bD\bT\bD^6,\\
&\qquad \bD^2\bT\bD, \bD^2\bT\bD^3, \bD^2\bT\bD^5, \bD^2\bT\bD^7,
	\bD^3\bT\bD^2, \bD^3\bT\bD^4, \bD^3\bT\bD^6,\\
&\qquad \bD^4\bT\bD, \bD^4\bT\bD^3, \bD^4\bT\bD^5, \bD^4\bT\bD^7,
	\bD^5\bT\bD^2, \bD^5\bT\bD^4, \bD^5\bT\bD^6,\\
&\qquad \bD^6\bT\bD, \bD^6\bT\bD^3, \bD^6\bT\bD^5, \bD^6\bT\bD^7,
	\bD^7\bT\bD^2, \bD^7\bT\bD^4, \bD^7\bT\bD^6\},\\
\mathfrak{C}_{9}
&= \{\bD^4\bT, \bT\bD^4, \bD\bT\bD^3, \bD^2\bT\bD^2, \bD^3\bT\bD,
		\bD^5\bT\bD^7, \bD^6\bT\bD^6, \bD^7\bT\bD^5,\\
&\qquad \bD\bT\bD^6\bT, \bD^3\bT\bD^2\bT,
	\bD^5\bT\bD^6\bT, \bD^7\bT\bD^2\bT\},\\
\mathfrak{C}_{10}
&= \{\bD^2\bT\bD^4\bT, \bD^2\bT\bD^6\bT, \bD^4\bT\bD^2\bT, \bD^4\bT\bD^6\bT,
	\bD^6\bT\bD^2\bT, \bD^6\bT\bD^4\bT\}.\\
\end{align*}
\fi

\section{The irreducible representations and the character table}
In this section, we will find all the irreducible representations of
$PG$ and its character table.
First we note that if we put $\bD = \eta D$ and $\bT = \eta^3 T$,
then $\bD$ and $\bT$ satisfy the relations (R0').
In other words, the following map $\rho_7$ affords a representation of $PG$:
\[
	\rho_7(\bD) =\eta D = \mbox{diag($\eta, i, -\eta$)},\quad
	\rho_7(\bT) = \eta^3 T = 
	\frac{-1}{2}
	\begin{pmatrix}
		1& 2&1\\
		1& 0&-1\\
		1&-2&1
	\end{pmatrix}.
\]

We find the complex conjugate
$\rho_6(\bD) = \bar{\rho_7}(\bD) = \mbox{diag}(\eta^7, -i, \eta^3)$
and $\rho_6(\bT) = \bar{\rho_7}(\bT) = \rho_7(\bT)$
also afford an irreducible representation of $PG$.
Moreover, the relations (R0')
preserve the parity of the number of generators,
we find that $\rho_2(\bD) = \rho_2(\bT) = -1$
afford a 1-dimensional representation other than
a trivial representation $\rho_1$.
Hence we have further two 3-dimensional irreducible
representations, $\rho_8 = \rho_2\otimes\rho_7$
and $\rho_9 = \rho_2\otimes\rho_6$.

Next we consider $\rho_7\otimes\rho_7$ and $\rho_7\otimes\rho_6$.
Let
$\langle \bolde_i\otimes\bolde'_j\ |\ i, j = 1,2,3\rangle$
be a basis of $V_7\otimes V_7$ (resp. $V_7\otimes V_6$).
Then we have the following representation matrices of $\bD$
and $\bT$ respectively:
\begin{align*}
	\rho_{7}\otimes\rho_{7}(\bD)
	&= \mbox{diag}(i, \eta^3, -i,
				   \eta^3,     -1, \eta^7,
				   -i, \eta^7, i)\\
	(\mbox{resp.} \rho_{7}\otimes\rho_{6}(\bD)
	&= \mbox{diag}(1, \eta^7, -1,
				   \eta,     1, \eta^5,
				   -1, \eta^3, 1)),\\
	\rho_{7}\otimes\rho_{7}(\bT) &= \rho_{7}\otimes\rho_{6}(\bT)\nonumber\\
	&= \frac{1}{4}
	\begin{pmatrix}
		1& 2& 1& 2& 4& 2& 1& 2& 1\\
		1& 0&-1& 2& 0&-2& 1& 0&-1\\
		1&-2& 1& 2&-4& 2& 1&-2& 1\\
		1& 2& 1& 0& 0& 0&-1&-2&-1\\
		1& 0&-1& 0& 0& 0&-1& 0& 1\\
		1&-2& 1& 0& 0& 0&-1& 2&-1\\
		1& 2& 1&-2&-4&-2& 1& 2& 1\\
		1& 0&-1&-2& 0& 2& 1& 0&-1\\
		1&-2& 1&-2& 4&-2& 1&-2& 1
	\end{pmatrix} .
\end{align*}
If we put
$\boldv_1 = \bolde_1\otimes \bolde'_1 + \bolde_3\otimes \bolde'_3$,
$\boldv_2 = \bolde_1\otimes \bolde'_3 + \bolde_3\otimes \bolde'_1$
and $\boldv_3 = \bolde_2\otimes \bolde'_2$,
then we find $V_4=\langle \boldv_1, \boldv_2, \boldv_3\rangle$
is $\rho_7^{\otimes 2}(\bT)$-invariant
as well as $\rho_7^{\otimes 2}(\bD)$-invariant.
Indeed, if we put $\rho_4 = \rho_7^{\otimes 2}|_{V_4}$,
then we have 
\[
\rho_4(\bT)(\boldv_1, \boldv_2, \boldv_3)
=(\boldv_1, \boldv_2, \boldv_3)
\frac{1}{2}
	\begin{pmatrix}
		1&1&2\\
		1&1&-2\\
		1&-1&0
	\end{pmatrix}
\]
and
\[
\rho_4(\bD)(\boldv_1, \boldv_2, \boldv_3)
= (\boldv_1, \boldv_2, \boldv_3)\mbox{diag}(i, -i, -1).
\]
Note that no eigenvector of $\rho_4(\bD)$ is 
$\rho_4(\bT)$-invariant.
This implies that $(\rho_4, V_4)$ defines an irreducible
representation of $PG$.
Hence $\rho_5 = \rho_4\otimes\rho_2$ also defines
an irreducible representation.

Next we put $V_3 = \langle \boldv_4, \boldv_5\rangle$, where
$\boldv_4 =
	2\bolde_1\otimes \bolde'_3 + 2\bolde_3\otimes \bolde'_1$ and 
\if0
\begin{align*}
	\boldv_4 &=
	2\bolde_1\otimes \bolde'_3 + 2\bolde_3\otimes \bolde'_1\\
\mbox{and}\\
	\boldv_5
	&= \bolde_1\otimes\bolde'_1
	+ \bolde_3\otimes\bolde'_3
	- \bolde_1\otimes\bolde'_3
	- \bolde_3\otimes\bolde'_1
	- \bolde_2\otimes\bolde'_2\,.
\end{align*}
\fi
\[
\boldv_5
	= \bolde_1\otimes\bolde'_1
	+ \bolde_3\otimes\bolde'_3
	- \bolde_1\otimes\bolde'_3
	- \bolde_3\otimes\bolde'_1
	- \bolde_2\otimes\bolde'_2.
\]
Then $\rho_3 = \rho_7\otimes\rho_6 |_{V_3}$
defines an irreducible representation of degree 2.
The representation matrices with respect to this basis
are
\[
\rho_3(\bD)(\boldv_4, \boldv_5)
=	(\boldv_4, \boldv_5)
\begin{pmatrix}
		-1&1\\
		0&1
	\end{pmatrix},\quad
\rho_3(\bT)(\boldv_4, \boldv_5)
=	(\boldv_4, \boldv_5)
\begin{pmatrix}
		1&0\\
		1&-1
	\end{pmatrix}.
\]
It is easy to check that this representation is irreducible.

So far, we have got 9 irreducible representations of $PG$.
The square sum of the dimensions of these irreducible representations is
\[
	1^2+1^2+2^2+3^2+3^2+3^2+3^2+3^2+3^2 = 60.
\]
Since $PG$ is of order 96 and has 10 conjugacy classes,
there must be one more irreducible representation $(\rho_{10}, V_{10})$
of degree 6.
Although we have not yet obtained the final representation,
if we use the orthogonality of the characters
we can find $\chi_{10}$, the character of $\rho_{10}$,
and obtain the character table of $PG$ as follows.
\begin{center}
\begin{tabular}{c|cccccccccc}
\hline
$PG$&	$\mathfrak{C}_1$&$\mathfrak{C}_2$&$\mathfrak{C}_3$&$\mathfrak{C}_4$&
		$\mathfrak{C}_5$&$\mathfrak{C}_6$&$\mathfrak{C}_7$&$\mathfrak{C}_8$&
	$\mathfrak{C}_9$&   $\mathfrak{C}_{10}$\\
     &       $\bar{1}$&     $\bD$&   $\bD^2$&   $\bD^3$&
	   $\bD^4$&   $\bD^6$&     $\bT$&    $\bD\bT$&
	  $\bD^4\bT$&  $\bD^2\bT\bD^4\bT$\\
order	& 1& 8& 4& 8&
	  2& 4& 2& 3&
	  4&4\\
size	& 1& 12& 3& 12&
	  3& 3& 12& 32&
	  12&6\\
\hline
$\chi_1$&
	 1&1&1&1&
	 1&1&1&1&
	 1&1\\
$\chi_2$&
	  1&$-1$&   1&$-1$&
	  1&   1&$-1$&   1&
	$-1$&   1\\
$\chi_3$&
	  2&0&2&0&
	  2&2&0&$-1$&
	  0&2\\
$\chi_4$&
	  3& $-1$&$-1$&$-1$&
	  3&$-1$&1&0&
	  1&$-1$\\
$\chi_5$&
	  3& $1$&$-1$&$1$&
	  3&$-1$&$-1$&0&
	  $-1$&$-1$\\
$\chi_6$&
	  3&$-i$&$a$&$i$&
	  $-1$&$b$&$-1$&0&
	  1&$1$\\
$\chi_7$&
	  3&$i$&$b$&$-i$&
	  $-1$&$a$&$-1$&0&
	  1&$1$\\
$\chi_8$&
	  3&$-i$&$b$&$i$&
	  $-1$&$a$&$1$&0&
	  $-1$&$1$\\
$\chi_9$&
	  3&$i$&$a$&$-i$&
	  $-1$&$b$&$1$&0&
	  $-1$&$1$\\
$\chi_{10}$& 6&    0&  2&   0&
	  $-2$&$2$& 0&   0&
	  0&  $-2$
\end{tabular}
\end{center}
Here $a = -1-2i$ and $b = -1+2i$.

Now we go back to the representation
$\rho_7\otimes\rho_6$.
If we put
\begin{align*}
V_{10} &=
\langle
	\boldw_3= \bolde_1\otimes\bolde'_1-\bolde_3\otimes\bolde'_3,
	\boldw_4= \bolde_1\otimes\bolde'_3-\bolde_3\otimes\bolde'_1,\\
	&\qquad
	\boldw_5= \bolde_1\otimes\bolde'_2,
	\boldw_6= \bolde_2\otimes\bolde'_1,
	\boldw_7= \bolde_2\otimes\bolde'_3,
	\boldw_8= \bolde_3\otimes\bolde'_2
\rangle,
\end{align*}
then $V_{10}$ is both $\bT$-invariant and $\bD$-invariant.
Hence $V_{10}$ defines a representation $\rho$
and we have
\[
	\rho(\bT)(\boldw_3, \ldots, \boldw_8)
= (\boldw_3, \ldots, \boldw_8)
\dfrac{1}{2}
\begin{pmatrix}
	0& 0& 1& 1& 1& 1\\
	0& 0&-1& 1& 1&-1\\
	1&-1& 0& 1&-1& 0\\
	1& 1& 1& 0& 0&-1\\
	1& 1&-1& 0& 0& 1\\
	1&-1& 0&-1&1& 0
\end{pmatrix}
\]
and
\[
	\rho(\bD)(\boldw_3, \ldots, \boldw_8)
	= \mbox{diag$(1,-1,\eta^7,\eta,\eta^5,\eta^3)$}.
\]
Since every character value of $\rho$ at each conjugacy class
coincides with that of $\chi_{10}$ of the character table of $PG$,
$\rho = \rho_{10}$ gives the irreducible representation
and we have thus obtained a complete set of representatives of the
irreducible representations.

\section{Decomposition of tensor representations}
In the previous section,
we have found the complete set of representatives of
all irreducible representations of $PG$.
In this section, we examine how the tensor powers of $\rho_{7}$
are decomposed into irreducible ones.

We follow the argument presented in
the paper \cite{KO}.
Let $\chi_1,\ldots, \chi_{10}$ be the
complete set of all irreducible characters of the group $PG$.
Now suppose that we have a character 
\[
\chi=m_1\chi_1+m_2\chi_2+\cdots + m_{10}\chi_{10},
\]
which
has a value $k_i$ 
at each conjugacy class $\mathfrak{C}_i,\ i=1,2,\ldots,10$.
Let $\boldX$ denote the matrix of the character table of $PG$,
then we have
\begin{equation}\label{eq:MulIrr}
	(m_1, \ldots, m_{10}) = (k_1, \ldots, k_{10})\boldX^{-1}.
\end{equation}

In order to examine the structure of the centralizer algebra
of $\rho_7^{\otimes k}$,
the tensor power of the natural representation $\rho_7$,
we decompose it into the irreducible ones.
For this, we need to decompose
$\rho_{7}\otimes\rho_i$ ($i = 1, 2, \ldots, 10$)
one by one.

By the argument and/or the character table in the previous section,
we already have the following:
\begin{align*}
	\chi_{7}\cdot\chi_1 &= \chi_{7},\\
	\chi_{7}\cdot\chi_2 &= \chi_{8}.
\end{align*}
Using the equation~\eqref{eq:MulIrr}, further we have
\begin{align*}
	\chi_{7}\cdot\chi_3 &= \chi_7+\chi_8,\\
	\chi_{7}\cdot\chi_4 &= \chi_6+\chi_{10},\\
	\chi_{7}\cdot\chi_5 &= \chi_9+\chi_{10},\\
	\chi_{7}\cdot\chi_6 &= \chi_1+\chi_3+\chi_{10},\\
	\chi_{7}\cdot\chi_7 &= \chi_4+\chi_6+\chi_9,\\
	\chi_{7}\cdot\chi_8 &= \chi_5+\chi_6+\chi_9, \\
	\chi_{7}\cdot\chi_9 &= \chi_2+\chi_3+\chi_{10},\\
	\chi_{7}\cdot\chi_{10} &= \chi_4+\chi_5+\chi_7+\chi_8+\chi_{10}.
\end{align*}

For example, we take up the $\chi_7\cdot \chi_{10}$ case.
It is easy to compute
\if0
\begin{align*}
\lefteqn{
	\chi_7\cdot\chi_{10}(\mathfrak{C}_{1}, \ldots, \mathfrak{C}_{10})
}\\
&= (18, 0, -2+4i, 0, 2, -2-4i, 0, 0, 0, -2).
\end{align*}
\fi
\[
\chi_7\cdot\chi_{10}(\mathfrak{C}_{1}, \ldots, \mathfrak{C}_{10})
= (18, 0, -2+4i, 0, 2, -2-4i, 0, 0, 0, -2).
\]
Then we compute
\[
(18, 0, -2+4i, 0, 2, -2-4i, 0, 0, 0, -2)\boldX^{-1}
=(0, 0, 0, 1, 1, 0, 1, 1, 0, 1)
\]
and this means, due to the identity \eqref{eq:MulIrr},
\if0
Using the identity \eqref{eq:MulIrr},
we have
\begin{align*}
\lefteqn{(0, 0, 0, 1, 1, 0, 1, 1, 0, 1)}\\
&= (18, 0, -2+4i, 0, 2, -2-4i, 0, 0, 0, -2)\boldX^{-1}.
\end{align*}
\[
{(0, 0, 0, 1, 1, 0, 1, 1, 0, 1)}
= (18, 0, -2+4i, 0, 2, -2-4i, 0, 0, 0, -2)\boldX^{-1}.
\]
\fi
\[
	\chi_{7}\cdot\chi_{10} = \chi_4+\chi_5+\chi_7+\chi_8+\chi_{10}.
\]

By the above calculation,
we obtain the Bratteli diagram of the decomposition of
$\rho_7^{\otimes k}$ into irreducible ones.
(For the Bratteli diagram,
see for example Goodman-de la Harpe-Jones~\cite{GHJ}, \S 2.3.)
Accordingly,
the square sum of the multiplicities on the $k$-th row
is the dimension of $\mbox{End}_{PG}(V_{7}^{\otimes k})$.
\[
\begin{xy}
(115,3)*{\mbox{square sum}}, 
(10,-5)="S0P1",
(10,-3)*{\rho_{1},1},
(20,-3)*{},
(30,-3)*{},
(40,-3)*{},
(50,-3)*{},
(60,-3)*{},
(70,-3)*{},
(80,-3)*{},
(90,-3)*{},
(100,-3)*{},
(115,-3)*{1}, 
(10,-18)*{},
(20,-18)*{},
(30,-18)*{},
(40,-18)*{},
(50,-18)*{},
(60,-18)*{},
(70,-18)*{\rho_{7},1}, (70,-15)="T1P7", (70,-20)="S1P7",
(80,-18)*{},
(90,-18)*{},
(100,-18)*{},
(115,-18)*{1}, 
(10,-33)*{},
(20,-33)*{},
(30,-33)*{},
(40,-33)*{\rho_{4},1}, (40,-30)="T2P4", (40,-35)="S2P4",
(50,-33)*{},
(60,-33)*{\rho_{6},1}, (60,-30)="T2P6", (60,-35)="S2P6",
(70,-33)*{},
(80,-33)*{},
(90,-33)*{\rho_{9},1}, (90,-30)="T2P9", (90,-35)="S2P9",
(100,-33)*{},
(115,-33)*{3}, 
(10,-48)*{\rho_{1},1}, (10,-45)="T3P1", (10,-50)="S3P1",
(20,-48)*{\rho_{2},1}, (20,-45)="T3P2", (20,-50)="S3P2",
(30,-48)*{\rho_{3},2}, (30,-45)="T3P3", (30,-50)="S3P3",
(40,-48)*{},
(50,-48)*{},
(60,-48)*{\rho_{6},1}, (60,-45)="T3P6", (60,-50)="S3P6",
(70,-48)*{},
(80,-48)*{},
(90,-48)*{},
(100,-48)*{\rho_{10},3}, (100,-45)="T3P10", (100,-50)="S3P10",
(115,-48)*{16}, 
(10,-63)*{\rho_{1},1}, (10,-60)="T4P1", (10,-65)="S4P1",
(20,-63)*{},
(30,-63)*{\rho_{3},1}, (30,-60)="T4P3", (30,-65)="S4P3",
(40,-63)*{\rho_{4},3}, (40,-60)="T4P4", (40,-65)="S4P4",
(50,-63)*{\rho_{5},3}, (50,-60)="T4P5", (50,-65)="S4P5",
(60,-63)*{},
(70,-63)*{\rho_{7},6}, (70,-60)="T4P7", (70,-65)="S4P7",
(80,-63)*{\rho_{8},6}, (80,-60)="T4P8", (80,-65)="S4P8",
(90,-63)*{},
(100,-63)*{\rho_{10},4}, (100,-60)="T4P10", (100,-65)="S4P10",
(115,-63)*{108}, 
(10,-78)*{},
(20,-78)*{},
(30,-78)*{},
(40,-78)*{\rho_{4},10}, (40,-75)="T5P4", (40,-80)="S5P4",
(50,-78)*{\rho_{5},10}, (50,-75)="T5P5", (50,-80)="S5P5",
(60,-78)*{\rho_{6},15}, (60,-75)="T5P6", (60,-80)="S5P6",
(70,-78)*{\rho_{7},6}, (70,-75)="T5P7", (70,-80)="S5P7",
(80,-78)*{\rho_{8},5}, (80,-75)="T5P8", (80,-80)="S5P8",
(90,-78)*{\rho_{9},15}, (90,-75)="T5P9", (90,-80)="S5P9",
(100,-78)*{\rho_{10},10}, (100,-75)="T5P10", (100,-80)="S5P10",
(115,-78)*{811}, 
{\ar @{{.}-{.}}"S0P1";"T1P7"}, 
{\ar @{{.}-{.}}"S1P7";"T2P4"}, 
{\ar @{{.}-{.}}"S1P7";"T2P6"},
{\ar @{{.}-{.}}"S1P7";"T2P9"},
{\ar @{{.}-{.}}"S2P4";"T3P6"}, 
{\ar @{{.}-{.}}"S2P4";"T3P10"},
{\ar @{{.}-{.}}"S2P6";"T3P1"},
{\ar @{{.}-{.}}"S2P6";"T3P3"},
{\ar @{{.}-{.}}"S2P6";"T3P10"},
{\ar @{{.}-{.}}"S2P9";"T3P2"},
{\ar @{{.}-{.}}"S2P9";"T3P3"},
{\ar @{{.}-{.}}"S2P9";"T3P10"},
{\ar @{{.}-{.}}"S2P4";"T3P6"},
{\ar @{{.}-{.}}"S3P1";"T4P7"}, 
{\ar @{{.}-{.}}"S3P2";"T4P8"},
{\ar @{{.}-{.}}"S3P3";"T4P7"},
{\ar @{{.}-{.}}"S3P3";"T4P8"},
{\ar @{{.}-{.}}"S3P6";"T4P1"},
{\ar @{{.}-{.}}"S3P6";"T4P3"},
{\ar @{{.}-{.}}"S3P6";"T4P10"},
{\ar @{{.}-{.}}"S3P10";"T4P4"},
{\ar @{{.}-{.}}"S3P10";"T4P5"},
{\ar @{{.}-{.}}"S3P10";"T4P7"},
{\ar @{{.}-{.}}"S3P10";"T4P8"},
{\ar @{{.}-{.}}"S3P10";"T4P10"},
{\ar @{{.}-{.}}"S4P1";"T5P7"}, 
{\ar @{{.}-{.}}"S4P3";"T5P7"},
{\ar @{{.}-{.}}"S4P3";"T5P8"},
{\ar @{{.}-{.}}"S4P4";"T5P6"},
{\ar @{{.}-{.}}"S4P4";"T5P10"},
{\ar @{{.}-{.}}"S4P5";"T5P9"},
{\ar @{{.}-{.}}"S4P5";"T5P10"},
{\ar @{{.}-{.}}"S4P7";"T5P4"},
{\ar @{{.}-{.}}"S4P7";"T5P6"},
{\ar @{{.}-{.}}"S4P7";"T5P9"},
{\ar @{{.}-{.}}"S4P8";"T5P5"},
{\ar @{{.}-{.}}"S4P8";"T5P6"},
{\ar @{{.}-{.}}"S4P8";"T5P9"},
{\ar @{{.}-{.}}"S4P10";"T5P4"},
{\ar @{{.}-{.}}"S4P10";"T5P5"},
{\ar @{{.}-{.}}"S4P10";"T5P7"},
{\ar @{{.}-{.}}"S4P10";"T5P8"},
{\ar @{{.}-{.}}"S4P10";"T5P10"},
\end{xy}
\]

\section{Centralizer algebra}
In the previous section,
we have seen that
the dimensions of ${\cal A}_k = \mbox{End}_{PG}(V_{7}^{\otimes k})$
($k=0,1,2,\ldots,5$)
are 1, 1, 3, 16, 108, 811.
The number of paths from the top vertex to
the bottom vertices on the Hasse diagram
will be calculated using the adjacency matrix of the diagram.
To be explicit, the formulae
\[
\chi_7(\mathfrak{C}_j)\cdot \chi_i(\mathfrak{C}_j)
=\sum_{k=1}^{10}m_{ik}\chi_k(\mathfrak{C}_j),
\ \ \  i,j=1,2,\ldots,10
\]
obtained in the previous section lead to 
\[
\boldX\mbox{diag$(\chi_7(\mathfrak{C}_1), \ldots, \chi_7(\mathfrak{C}_{10}))$}
 = A\boldX
\]
or
\begin{equation}\label{eq:CharAdjacency}
\boldX\mbox{diag$(\chi_7(\mathfrak{C}_1), \ldots, \chi_7(\mathfrak{C}_{10}))$}
\boldX^{-1} = A
\end{equation}
where
\[
A =
\begin{pmatrix}
0&0&0&0&0&0&1&0&0&0\\
0&0&0&0&0&0&0&1&0&0\\
0&0&0&0&0&0&1&1&0&0\\
0&0&0&0&0&1&0&0&0&1\\
0&0&0&0&0&0&0&0&1&1\\
1&0&1&0&0&0&0&0&0&1\\
0&0&0&1&0&1&0&0&1&0\\
0&0&0&0&1&1&0&0&1&0\\
0&1&1&0&0&0&0&0&0&1\\
0&0&0&1&1&0&1&1&0&1
\end{pmatrix}\, .
\]

Let $d_1(k),\ldots, d_{10}(k)$ be
the number of paths at the bottom (the $k$-th floor) vertices
of the Hasse diagram.
Here $k$ starts from $0$ and the floor on which the first $\rho_1$ lies is the $0$-th floor.
Then the $d_{\ell}(k)$s are equal to the multiplicities of the irreducible
components of $PG$ in End$(V_7^{\otimes k})$:
\[
\chi_7^k=d_1(k)\chi_1+d_2(k)\chi_2+\cdots + d_{10}(k)\chi_{10}.
\]
It is easy to see
\[
(d_1(k),\ldots, d_{10}(k)) =
	(1,0,0,0,0,0,0,0,0,0)A^k \quad (k\geq 0).
\]
By the relation \eqref{eq:CharAdjacency}, however,
they are calculated by the character table $\boldX$ of $PG$ as follows.
\[
(d_1(k),\ldots, d_{10}(k)) =
(1, 0,\ldots, 0)\boldX
\mbox{diag$(\chi_7(\mathfrak{C}_1)^k, \ldots, \chi_7(\mathfrak{C}_{10})^k)$}
\boldX^{-1}.
\]
Hence we have for $k\geq 1$
\begin{align*}
d_1(k) &= \frac{3^k}{96}+\frac{a^k}{32}+\frac{b^k}{32}
+\frac{5\cdot(-1)^k}{32}+\frac{3}{16}+\frac{i^k}{8}+\frac{(-i)^k}{8},\\
d_2(k) &= {\frac {3^k}{96}}+\frac{a^k}{32}+\frac{b^k}{32}
-\frac{3\cdot(-1)^k}{32}-\frac{1}{16}-\frac{i^k}{8}-\frac{(-i)^k}{8},\\
d_{3}(k) &= \frac{3^k}{48}+\frac{a^k}{16}
+\frac{b^k}{16}+\frac{(-1)^k}{16}+\frac{1}{8},\\
d_{4}(k) &= \frac{3^k}{32}-\frac{a^k}{32}-\frac{b^k}{32}
+\frac{7\cdot(-1)^k}{32}+\frac{1}{16}-\frac{i^k}{8}-\frac{(-i)^k}{8},\\
d_{5}(k) &= \frac{3^k}{32}-\frac{a^k}{32}-\frac{b^k}{32}
-\frac{(-1)^k}{32}-\frac{3}{16}+\frac{i^k}{8}+\frac{(-i)^k}{8},\\
d_{6}(k) &= \frac{3^k}{32}+\frac{a\cdot a^k}{32}
+\frac{b\cdot b^k}{32}-\frac{5\cdot(-1)^k}{32}
+\frac{3}{16}+\frac{i\cdot{i}^k}{8}-\frac{i\cdot(-i)^k}{8},\\
d_{7}(k) &= \frac{3^k}{32}+\frac{b\cdot a^k}{32}
+\frac{a\cdot b^k}{32}-\frac {5\cdot(-1)^k}{32}
+\frac{3}{16}-\frac{i\cdot{i}^k}{8}+\frac{i\cdot(-i)^k}{8},\\
d_{8}(k) &= \frac{3^k}{32}+\frac{b\cdot a^k}{32}
+\frac{a\cdot b^k}{32}+\frac{3\cdot(-1)^k}{32}
-\frac{1}{16}+\frac{i\cdot{i}^k}{8}-\frac{i\cdot(-i)^k}{8},\\
d_{9}(k) &= \frac{3^k}{32}+\frac{a\cdot a^k}{32}
+\frac{b\cdot b^k}{32}+\frac{3\cdot(-1)^k}{32}
-\frac{1}{16}-\frac{i\cdot{i}^k}{8}+\frac{i\cdot(-i)^k}{8},\\
d_{10}(k) &= \frac{3^k}{16}+\frac{a^k}{16}+\frac{b^k}{16}
-\frac{(-1)^k}{16}-\frac{1}{8}.
\end{align*}
Here $a = -1-2i$ and $b = -1+2i$.
We shall summarize our results in the following way.

\begin{thm}
Let ${\cal A}_k = \End_{PG}(V_7^{\otimes k})$
be a centralizer algebra of $PG$ in $V_7^{\otimes k}$,
where $PG$ acts on $V_7^{\otimes k}$ diagonally.
Then ${\cal A}_k$ has the following multi-matrix structure:
\[
	{\cal A}_{k}
	\cong
\begin{cases} \Comp & (k=0),\\
	\bigoplus_{\ell=1}^{10} 
	M_{d_{\ell}(k)}(\Comp) &(k\geq 1) \end{cases}
\]
in which the $d_{\ell}(k)$s are explicitly determined above
and $M_d(\Comp)$ denotes the matrix algebra over $\Comp$ of size $d$.
\end{thm}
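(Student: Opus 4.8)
The plan is to deduce the theorem from the decomposition of $V_7^{\otimes k}$ established in Section~5 together with the classical double-centralizer theorem. Since $PG$ is a finite group and the ground field is $\Comp$, the group algebra $\Comp[PG]$ is semisimple by Maschke's theorem, so every $PG$-module is completely reducible. In Section~5 we found that
\[
	V_7^{\otimes k} \cong \bigoplus_{\ell=1}^{10} V_\ell^{\oplus d_\ell(k)},
\]
where the multiplicities $d_\ell(k)$ are precisely the entries computed from the adjacency matrix $A$, equivalently the explicit closed formulas displayed above. The whole strategy is then to read off $\End_{PG}(V_7^{\otimes k})$ directly from this isotypic decomposition.

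First I would invoke Schur's Lemma. The representations $\rho_1,\ldots,\rho_{10}$ constitute a complete set of pairwise inequivalent irreducibles of $PG$, and over the algebraically closed field $\Comp$ each is absolutely irreducible, so $\mbox{Hom}_{PG}(V_\ell, V_{\ell'}) \cong \Comp$ when $\ell=\ell'$ and $0$ otherwise. Writing $W = V_7^{\otimes k} = \bigoplus_\ell V_\ell^{\oplus d_\ell(k)}$, an arbitrary $PG$-endomorphism of $W$ decomposes blockwise over the pairs $(\ell,\ell')$; by the vanishing just noted only the diagonal blocks $\ell=\ell'$ survive, and each such block is
\[
	\mbox{Hom}_{PG}\!\left(V_\ell^{\oplus d_\ell(k)}, V_\ell^{\oplus d_\ell(k)}\right)
	\cong M_{d_\ell(k)}\!\left(\mbox{Hom}_{PG}(V_\ell,V_\ell)\right)
	\cong M_{d_\ell(k)}(\Comp).
\]
Collecting the blocks yields $\End_{PG}(V_7^{\otimes k}) \cong \bigoplus_{\ell=1}^{10} M_{d_\ell(k)}(\Comp)$ for $k\ge 1$. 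As a built-in consistency check, the dimension of this algebra is $\sum_\ell d_\ell(k)^2$, which must reproduce the ``square sum'' entries $1,1,3,16,108,811$ recorded along the right edge of the Bratteli diagram for $k=0,\ldots,5$.

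For $k=0$ the tensor power is, by convention, the one-dimensional trivial module $\Comp$, whose $PG$-endomorphisms are only scalars, giving $\End_{PG}(V_7^{\otimes 0}) \cong \Comp$; this is the separate first case. Since every step after the Section~5 decomposition is a formal consequence of semisimplicity and Schur's Lemma, there is no genuine obstacle remaining at this stage: the substantive content lies entirely in Section~5, namely in verifying the products $\chi_7\cdot\chi_i$, hence the adjacency matrix $A$, and thence the multiplicities $d_\ell(k)$ and their closed forms. The one point deserving a word of care is simply to confirm that the boundary convention $d_\ell(0)=\delta_{\ell 1}$ is compatible with, though stated separately from, the closed formulas, which are asserted only for $k\ge 1$.
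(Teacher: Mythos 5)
Your proposal is correct and follows essentially the same route as the paper: the paper treats this theorem as a summary, obtaining the multiplicities $d_\ell(k)$ from the character-theoretic computations of Sections 5--6 and then reading off the multi-matrix structure via semisimplicity and Schur's Lemma (what the introduction calls Schur--Weyl duality), which is exactly the argument you make explicit. The only difference is one of exposition: you spell out the isotypic-decomposition and Hom-space bookkeeping that the paper leaves implicit.
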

Calculating the square sum of the dimensions of
the simple components of ${\cal A}_k$
we finally derive the following
\begin{cor}
We have
\[
\dim {\cal A}_k = \begin{cases} 1 & (k=0),\\
\dfrac{57+6\cdot 5^k+9^k}{96} & (k\geq 1).
\end{cases}
\]
\end{cor}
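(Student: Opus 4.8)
The plan is to compute $\dim\mathcal{A}_k = \sum_{\ell=1}^{10} d_\ell(k)^2$ by summing the squares of the ten explicit multiplicity formulas established just above the theorem. Since $\dim\mathcal{A}_k$ is the square sum of the multiplicities on the $k$-th row of the Bratteli diagram (equivalently, the number of closed paths of length $2k$ from the top vertex, which is $\chi_7^k\cdot\overline{\chi_7^k}$ paired against itself), I first observe that a cleaner route avoids squaring the $d_\ell(k)$ directly. By the orthogonality relations, $\sum_{\ell} d_\ell(k)^2$ equals the inner product $\langle \chi_7^k, \chi_7^k\rangle$, which by the standard formula is $\frac{1}{|PG|}\sum_{j=1}^{10} |\mathfrak{C}_j|\,|\chi_7(\mathfrak{C}_j)|^{2k}$, where $|\mathfrak{C}_j|$ is the size of the $j$-th conjugacy class read off from the character table and $|PG|=96$.

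The key step is then to read the values $\chi_7(\mathfrak{C}_j)$ from row $\chi_7$ of the character table together with the class sizes from the "size" row, and to compute $|\chi_7(\mathfrak{C}_j)|^{2}$ for each class. Inspecting that row, the entries are $3, i, b, -i, -1, a, -1, 0, 1, 1$ with $a=-1-2i$, $b=-1+2i$; their squared absolute values are $9, 1, 5, 1, 1, 5, 1, 0, 1, 1$ (using $|a|^2=|b|^2=5$), and the matching class sizes are $1,12,3,12,3,3,12,32,12,6$. Grouping the classes by the common value of $|\chi_7(\mathfrak{C}_j)|^{2}$ gives three buckets: the identity class contributes $|\chi_7|^2=9$; the classes on which $|\chi_7|^2=5$ contribute with total size $3+3=6$; and the classes on which $|\chi_7|^2=1$ contribute with total size $12+12+3+12+12+6=57$ (the class $\mathfrak{C}_8$ of size $32$ drops out since $\chi_7$ vanishes there). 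Hence
\[
\dim\mathcal{A}_k=\frac{1}{96}\Bigl(1\cdot 9^k+6\cdot 5^k+57\cdot 1^k\Bigr)=\frac{57+6\cdot 5^k+9^k}{96}\qquad(k\ge 1),
\]
while $\dim\mathcal{A}_0=1$ by the $k=0$ case of the theorem, giving exactly the two branches of the claimed formula.

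I do not expect a genuine obstacle here, as this is essentially a bookkeeping computation once the character-orthogonality reformulation is in place; the only point requiring care is the grouping of conjugacy classes by $|\chi_7(\mathfrak{C}_j)|^2$ and the correct accounting of class sizes, which is why the three coefficients $9,6,57$ appear precisely as the numerator demands. If one prefers to avoid invoking orthogonality, the same result follows by directly expanding $\sum_\ell d_\ell(k)^2$ from the closed forms of the $d_\ell(k)$; the cross terms among the powers $9^k=3^{2k}$, $a^k$, $b^k$, $(-1)^k$, $i^k$, $(-i)^k$ and constants collapse after using $a\bar a=b\bar b=5$ and $a+b=-2$, but this is substantially more tedious and merely reproduces the orthogonality identity by hand, so the character-theoretic route is the one I would present.
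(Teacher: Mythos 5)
Your proposal is correct, and it takes a genuinely different route from the paper. The paper's own derivation is exactly the brute-force computation you decline at the end: it takes the ten closed formulas for $d_\ell(k)$ displayed just before the theorem, squares each, and sums, letting the cross terms among $3^k$, $a^k$, $b^k$, $(-1)^k$, $(\pm i)^k$ collapse. You instead invoke orthonormality of irreducible characters to write
\[
\dim {\cal A}_k=\sum_{\ell=1}^{10}d_\ell(k)^2=\langle\chi_7^{\,k},\chi_7^{\,k}\rangle
=\frac{1}{96}\sum_{j=1}^{10}|\mathfrak{C}_j|\,\bigl|\chi_7(\mathfrak{C}_j)\bigr|^{2k},
\]
so that only the $\chi_7$ row of the character table and the class sizes are needed. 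Your bookkeeping checks against the table: $|\chi_7|^2$ equals $9$ on $\mathfrak{C}_1$ (size $1$), equals $5$ on $\mathfrak{C}_3,\mathfrak{C}_6$ (total size $6$, via $|a|^2=|b|^2=5$), equals $1$ on $\mathfrak{C}_2,\mathfrak{C}_4,\mathfrak{C}_5,\mathfrak{C}_7,\mathfrak{C}_9,\mathfrak{C}_{10}$ (total size $12+12+3+12+12+6=57$), and vanishes on $\mathfrak{C}_8$, yielding $(9^k+6\cdot 5^k+57)/96$ for $k\ge 1$, in agreement with the tabulated values $1,3,16,108,811$. What your route buys: it is far less error-prone than expanding ten squares, it makes the shape of the answer transparent (the bases $9,5,1$ are the distinct values of $|\chi_7|^2$ on conjugacy classes and the coefficients $1,6,57$ the total sizes of the classes carrying them), and it generalizes verbatim to any tensor power of any representation of any finite group. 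What the paper's route buys is only economy of means: it recycles the explicit $d_\ell(k)$ already established for the theorem, so the corollary follows with no further appeal to character theory. A minor bonus of your method: under the convention $0^0=1$, the otherwise excluded class $\mathfrak{C}_8$ contributes $32\cdot 0^k$, and the single expression $\tfrac{1}{96}\,(9^k+6\cdot 5^k+57+32\cdot 0^k)$ covers $k=0$ as well, since $96/96=1$.
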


\bigskip

We conclude this paper with a small table of the values $\dim \mathcal{A}_k$.

\medskip

\begin{tabular}{|c|c|c|c|c|c|c|c|c|c|c|}
\hline
$k $& 0 & 1 & 2 & 3 & 4& 5 & 6 & 7 & 8 &9\\
\hline
$\dim \mathcal{A}_k$ & 1 & 1 & 3 & 16 & 108 & 811 & 6513 & 54706& 472818& 4157701\\
\hline
\end{tabular}

\bigskip

{\bf Acknowledgment}. 
This work was supported by JSPS KAKENHI Grant Number 25400014.

{\sc Department of Mathematical Sciences, University of the Ryukyus,
Okinawa, 903-0213,
Japan}

{\it E-mail address}: kosuda@math.u-ryukyu.ac.jp

\bigskip

{\sc Graduate School of Natural Science and Technology,
Kanazawa University,
Ishikawa, 920-1192
Japan}

{\it E-mail address}: oura@se.kanazawa-u.ac.jp
\end{document}